\documentclass[12pt]{article}
\usepackage[utf8]{inputenc}
\usepackage[russian]{babel}
\usepackage{amsfonts, amsmath, amsthm}
\usepackage{colortbl}
\usepackage[square, comma, sort&compress, numbers]{natbib}

\oddsidemargin=-5mm
\topmargin=-15mm
\textwidth=174mm
\textheight=238mm

\pagestyle{plain}

\usepackage{natbib}
\usepackage{graphicx}

\begin{document}

\newcommand{\widevdots}{\;\vdots\;}
\newcommand{\vdW}{\text{vdW}}
\newcommand{\SR}{\text{SR}}

\newtheorem{opredelenie}{Определение}
\newtheorem{teorema}{Теорема}
\newtheorem*{lemma}{Лемма}
\newtheorem{sledstvie}{Следствие}
\newtheorem*{svojstvo}{Свойство}

\large

\title{Существование одноцветных симплексов заданного объёма на~раскрашенной рациональной решётке}

\author{Шарич Владимир Златкович \and Канель-Белов Алексей (MIPT, BIU)}


\maketitle













\begin{abstract} 
В работе доказывается существование одноцветных стандартных симплексов заданного объема на многомерной рациональной решетке, покрашенной в конечное число цветов.
Данная работа была проведена с помощью Российского Научного Фонда Грант N 17-11-01377.
 \end{abstract}
\tableofcontents



\section{Введение}

В данной работе рассматривается рациональное координатое $n$-мерное пространство, состоящее из всех точек $(x,y,\dots,z)$, где $x,y,\dots,z\in\mathbb{Q}$. Такие точки будем называть {\it рациональными}.

Кроме того, каждой точке рационального пространства плоскости приписан цвет (формально говоря, все рациональные точки разбиты на несколько непересекающихся классов).

\begin{opredelenie}[Одноцветная фигура] Будем говорить, что фигура {\it одноцветна}, если все её точки имеют одинаковые цвета. \end{opredelenie}

Знаменитая теорема ван дер Вардена \cite{vdWbug} утверждает, что при любой раскраске натурального ряда в конечное число цветов найдётся сколь угодно длинная одноцветная арифметическая прогрессия. Хорошо известно, что аналогичный факт верен в любой размерности: {\it пусть на $n$-мерной целочисленной решётке дана фигура $\mathcal{M}$; при любой раскраске этой решётки в конечное число цветов найдется одноцветная фигура $\mathcal{M'}$, получающаяся из фигуры $\mathcal{M}$ гомотетией с натуральным коэффициентом или сдвигом на вектор с натуральными координатами.}

Гомотетии с натуральными коэффицентами порождают группу преобразований, состоящую из гомотетий с рациональными коэффициентами и сдвигов на векторы с рациональными координатами: $$G_1 = \{\mathcal{H}_P^k, P\in\mathbb{Q}^n, k\in\mathbb{Q},k>0\} \cup \{T_{\overrightarrow{v}}, \overrightarrow{v}\in\mathbb{Q}^n\}.$$ Эта группа действует на множестве фигур рационального координатного пространства $2^{\mathbb{Q}^n}$. Таким образом, теорема ван дер Вардена описывает комбинаторные свойства действия $G_1:2^{\mathbb{Q}^n}$, утверждая, что при любой раскраске пространства в конечное число цветов любая орбита содержит одноцветную фигуру.

Другая интересное действие --- группа изометрий, действующая на $2^{\mathbb{R}^n}$ --- таким свойством не обладает. Известно, что в каждой размерности найдется такое число цветов (например, $h>(4\lfloor\sqrt{n}+1\rfloor)^n$), что в орбите двухэлементного множества может не оказаться одноцветной фигуры \cite{raigorhrom}.

В данной работе мы будем рассматривать положительные рациональные гиперолические повороты плоскости $$\mathcal{U}_{(x_0,y_0)}^k\colon (x,y)\mapsto \left(x_0+k(x-x_0),y_0+\dfrac{y-y_0}{k}\right), \qquad x_0,y_0\in\mathbb{Q},k\in\mathbb{Q},k>0.$$ Нас будет интересовать действие на $2^{\mathbb{Q}^2}$ группы $G_2$, порождённой положительными рациональными гиперболическими поворотами. Эта группа состоит из положительных рациональных гиперболических поворотов и сдвигов на векторы с рациональными координатами: $$G_2=\{\mathcal{U}_P^k, P\in\mathbb{Q}^2, k\in\mathbb{Q},k>0\} \cup \{T_{\overrightarrow{v}}, \overrightarrow{v}\in\mathbb{Q}^2\}.$$

Для последующих теорем нам понадобится обозначение $\vdW_h(N)$ (где $N$ --- число) из следующей формулировки-усиления теоремы ван дер Вардена: <<{\it Найдётся такое число $\vdW_h(N)\in\mathbb{N}$, что из любой арифметической прогрессии длины $\vdW_h(N)$, покрашенной в $h$ цветов, можно выбрать одноцветную подпрогрессию длины $N$.}>> В качестве $\vdW_h(N)$ принято выбирать наименьшее число, удовлетворяющее указанным условиям, но это не обязательно. Введём также аналогичное обозначение $\vdW_h(\mathcal{M})$ (где $\mathcal{M}$ --- фигура) из многомерного обобщения усиления теоремы ван дер Вардена: <<{\it Пусть на $n$-мерной целочисленной решётке дана фигура $\mathcal{M}$; найдётся число $\vdW_h(\mathcal{M})\in\mathbb{N}$, такое что при любой раскраске $n$-мерного куба $$\underbrace{\vdW_h(\mathcal{M})\times \vdW_h(\mathcal{M}) \times \ldots \times \vdW_h(\mathcal{M})}_{n}$$ в $h$ цветов в этом кубе найдется одноцветная фигура $\mathcal{M'}$, получающаяся из фигуры $\mathcal{M}$ композицией гомотетии с натуральным коэффициентом и сдвига на вектор с натуральными координатами.}>>

\begin{opredelenie}[Стандартный треугольник] {\it Стандартные треугольники} --- это треугольники вида $$(x_0,y_0), \quad (x_0+b,y_0), \quad (x_0,y_0+a),$$ где $x_0,y_0,a,b\in\mathbb{Q}$, $a,b>0$. \end{opredelenie}

Нас будут преимущественно интересовать стандартные треугольники площади 1/2, т.е. такие, для которых $ab=1$. Все такие треугольники переходят друг в друга при действии $G_2$, т.е. являются одной орбитой.

{\bf Основной двумерный результат:} при любой раскраске рациональной плоскости в конечное число цветов найдется одноцветный стандартный треугольник заданной площади.

\begin{opredelenie}[Стандартный симплекс] {\it Стандартный симплекс} --- это симплекс вида $$(x_0,y_0,\dots,z_0), \quad (x_0+b,y_0,\dots,z_0), \quad (x_0,y_0+a,\dots,z_0),\quad \dots, \quad (x_0,y_0,\dots,z_0+c)$$ где $x_0,y_0,\dots,z_0,a,b,\dots,c\in\mathbb{Q}$, $a,b,\dots,c>0$.  \end{opredelenie}

Нас будут преимущественно интересовать стандартные симплексы объема $1/n$, т.е. такие, для которых $a\cdot b\cdot \ldots \cdot c=1$.

{\bf Основной многомерный результат:} при любой раскраске рационального $n$-мерного пространства в конечное число цветов найдется одноцветный стандартный симплекс заданного объема.

\bigskip



Структура работы такова: \par \begin{itemize}
    \item в разделе <<Косой дождь и его свойства>> описываются инструменты для исследования стандартных треугольников площади 1/2;
    \item в разделе <<Одноцветные треугольники>> доказывается существование одноцветных стандартных треугольников площади 1/2 (а также любой заданной площади) при раскраске рациональной плоскости в конечное число цветов;
    \item в разделе <<Одноцветные симплексы>> доказывается существование одноцветных стандартных симплексов объема $1/n$ при раскраске рационального пространства в конечное число цветов;
    \item в разделе <<Перспективы>> описываются дальнейшие направления исследования.
\end{itemize}



Данная работа была проведена с помощью Российского Научного Фонда Грант N 17-11-01377.

\section{Косой дождь и его свойства}

Косой дождь --- конструкция, позволяющая отслеживать положение стандартных треугольников площади 1/2.

\medskip

\begin{opredelenie}[Косой дождь] {\it Косой дождь} длины $\ell$ и с шагом $b$ --- набор рациональных точек состоящий из: 
\begin{itemize} 
\item {\it основания} --- горизонтальной арифметической прогрессии длины $\ell$ и шагом $b$;
 \item {\it слоёв} --- $\ell-1$ горизонтальных арифметических прогрессий с шагом $b$, начало которых расположено над началом основания и
  
  \begin{itemize} 
  \item первая из которых имеет длину $\ell-1$ и расположена на высоте $\dfrac{1}{b}$ над основанием,
      \item вторая --- имеет длину $\ell-2$ и расположена над основанием на высоте $\dfrac{1}{2b}$,
       \item \dots,
       \item $k$-я имеет длину $\ell-k$ и расположена над основанием на высоте $\dfrac{1}{kb}$, \item \dots,
       \item $(\ell-1)$-я имеет длину 1 ($=\ell-(\ell-1)$) и расположена над основанием на высоте $\dfrac{1}{\ell b}$.
       \end{itemize}
       \end{itemize}
       
       \par Иными словами, косой дождь --- это $$ \underbrace{\left\{(x_0+\kappa b,y_0),\kappa\in\{0,1,\dots,\ell-1\}\right\}}_{\text{\large \it основание}}$$ $$\cup\underbrace{\left\{\left(x_0+\kappa b,y_0+\dfrac{1}{k b}\right),k\in\{1,2,\dots,\ell-1\},\kappa\in\{0,1,\dots,\ell-k\}\right\}}_{\text{\large \it слои}}.$$ \end{opredelenie}



\medskip

Заметим, что косой дождь определяется своим основанием, а точки в слоях косого дождя --- это в точности всевозможные верхние вершины стандартных треугольников площади 1/2, нижние две вершины которых лежат в основнии косого дождя. Отсюда очевидно следует

\begin{svojstvo} Если основание одного косого дождя является подмножеством основания другого косого дождя, то и слои первого косого дождя содержатся в слоях второго косого дождя. \end{svojstvo}

\bigskip

Нас будет интересовать, содержится ли в слоях достаточно большого косого дождя другой косой дождь наперёд заданной длины. Оказывается, да.

\bigskip

\begin{teorema} \label{poddozhd} Для любого $\ell$ в слоях косого дождя длины $\ell!\ell+1$ содержится (т.е. является подмножеством) косой дождь длины $\ell$. \end{teorema}

\begin{proof} Будем рассматривать дождь длины $L$ и называть его {\it большим}. Искомый дождь длины $\ell$ (существование которого предстоит установить) будем называть {\it малым}. Рассмотрим, какими свойствами должен обладать малый дождь для того, чтобы оказаться подмножеством слоёв большого дождя.

Не умаляя общности, для удобства изложения можно считать, что шаг большого дождя равен 1. Если это не так, применим аффинное преобразование $$(x,y)\mapsto \left(tx,\dfrac{1}{t}y\right)$$ с подходящим $t>0$. Очевидно, что такое преобразование переводит косой дождь в косой дождь.

Пусть малый дождь имеет шаг $b\in\mathbb{N}$, а его основание находится на высоте $\frac{1}{m}$ ($m\in\mathbb{N},m\le L-1$) над основанием большого дождя. Для того, чтобы хотя бы один такой малый дождь содержался в большом, необходимо и достаточно выполнение следующих условий: \begin{gather*} (\ell-1)b+1\leq L-m, \qquad (\ast) \\ \forall k\in{1,2,\dots,\ell-1} \quad \exists n\in\mathbb{N}, n\leq L-1 \quad \dfrac{1}{m}+\dfrac{1}{kb}=\dfrac{1}{n}. \qquad (\ast\ast) \end{gather*}

Заметим, что условие $n\le L-1$ лишнее: оно автоматически следует из того, что $\dfrac{1}{n}>\dfrac{1}{m}$ (откуда $n<m$) и $m\le L-1$.

\medskip

\begin{lemma} Для натуральных чисел $m$ и $kb$ найдется натуральное число $n$ такое, что $$\dfrac{1}{m}+\dfrac{1}{kb}=\dfrac{1}{n},$$ тогда и только тогда, когда $k^2b^2\widevdots m+kb$. \end{lemma}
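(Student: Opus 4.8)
The plan is to solve the defining equation for $n$ explicitly and then translate the requirement ``$n$ is a natural number'' into a divisibility condition, which I would then simplify by a single congruence.

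First I would clear denominators in $\dfrac{1}{m}+\dfrac{1}{kb}=\dfrac{1}{n}$. Writing the left-hand side over the common denominator $m\cdot kb$ gives
$$\frac{m+kb}{m\cdot kb}=\frac{1}{n},\qquad\text{hence}\qquad n=\frac{m\cdot kb}{m+kb}.$$
Since $m,kb\in\mathbb{N}$, the right-hand side is a well-defined positive rational number, so it is automatically positive and the only question is whether it is an integer. Thus a natural $n$ satisfying the equation exists if and only if $m+kb$ divides $m\cdot kb$.

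The key step is to rewrite this divisibility in the form claimed. Working modulo $m+kb$, one has $m\equiv -kb$, and therefore $m\cdot kb\equiv -(kb)^2=-k^2b^2\pmod{m+kb}$. Consequently $m+kb\mid m\cdot kb$ holds precisely when $m+kb\mid k^2b^2$, i.e.\ when $k^2b^2\widevdots m+kb$, which is exactly the asserted condition. This would close both directions of the equivalence at once.

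I do not expect any genuine obstacle here: the statement is elementary, and its whole content is the substitution $m\equiv -kb\pmod{m+kb}$ that replaces the product $m\cdot kb$ by $-k^2b^2$. The only two points I would check explicitly are that $n$ is guaranteed positive (which follows from $m,kb>0$) and that integrality of $n$ is equivalent to the divisibility relation (immediate from the closed form for $n$); both are routine.
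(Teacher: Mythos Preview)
Your argument is correct and coincides with the paper's proof: both first reduce the existence of $n$ to the divisibility $m+kb\mid m\cdot kb$, and then pass to $m+kb\mid k^2b^2$ by the same trick (the paper writes it as $kb(m+kb)-m\cdot kb=k^2b^2$, which is exactly your congruence $m\equiv -kb\pmod{m+kb}$ in disguise).
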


\begin{proof} \begin{gather*} \exists n\in\mathbb{N} \quad \dfrac{1}{n}=\dfrac{1}{m}+\dfrac{1}{kb}=\dfrac{m+kb}{m\cdot kb} \qquad \Longleftrightarrow \qquad m\cdot kb\widevdots m+kb \\ m\cdot kb\widevdots m+kb \quad\Leftrightarrow\quad kb(m+kb)-m\cdot kb\widevdots m+kb \quad\Leftrightarrow\quad k^2b^2\widevdots m+kb.\end{gather*} \end{proof}

\medskip

Положим $m=b$. Условие $k^2b^2\widevdots m+kb$ преобразуется в условие $k^2b^2\widevdots (k+1)b$, что равносильно $b\widevdots k+1$. Следовательно, для выполнения $(\ast\ast)$ достаточно положить $b=\ell!$.

Для выполнения $(\ast)$ достаточно положить $L=(\ell-1)\ell!+1+\ell!=\ell!\ell+1$.

\end{proof}

\medskip

\centerline{\small\it Иллюстрация теоремы 1: косой дождь длины 19, в слоях которого содержится косой дождь длины 3.}

\nopagebreak

\unitlength=1mm

\hspace{-5mm}
\begin{picture}(180,125)

\put(0,0){\circle*{1}}
\put(10,0){\circle*{1}}
\put(20,0){\circle*{1}}
\put(30,0){\circle*{1}}
\put(40,0){\circle*{1}}
\put(50,0){\circle*{1}}
\put(60,0){\circle*{1}}
\put(70,0){\circle*{1}}
\put(80,0){\circle*{1}}
\put(90,0){\circle*{1}}
\put(100,0){\circle*{1}}
\put(110,0){\circle*{1}}
\put(120,0){\circle*{1}}
\put(130,0){\circle*{1}}
\put(140,0){\circle*{1}}
\put(150,0){\circle*{1}}
\put(160,0){\circle*{1}}
\put(170,0){\circle*{1}}
\put(180,0){\circle*{1}}

\put(0,120){\circle*{0.7}}
\put(10,120){\circle*{0.7}}
\put(20,120){\circle*{0.7}}
\put(30,120){\circle*{0.7}}
\put(40,120){\circle*{0.7}}
\put(50,120){\circle*{0.7}}
\put(60,120){\circle*{0.7}}
\put(70,120){\circle*{0.7}}
\put(80,120){\circle*{0.7}}
\put(90,120){\circle*{0.7}}
\put(100,120){\circle*{0.7}}
\put(110,120){\circle*{0.7}}
\put(120,120){\circle*{0.7}}
\put(130,120){\circle*{0.7}}
\put(140,120){\circle*{0.7}}
\put(150,120){\circle*{0.7}}
\put(160,120){\circle*{0.7}}
\put(170,120){\circle*{0.7}}

\put(0,60){\circle*{0.7}}
\put(10,60){\circle*{0.7}}
\put(20,60){\circle*{0.7}}
\put(30,60){\circle*{0.7}}
\put(40,60){\circle*{0.7}}
\put(50,60){\circle*{0.7}}
\put(60,60){\circle*{0.7}}
\put(70,60){\circle*{0.7}}
\put(80,60){\circle*{0.7}}
\put(90,60){\circle*{0.7}}
\put(100,60){\circle*{0.7}}
\put(110,60){\circle*{0.7}}
\put(120,60){\circle*{0.7}}
\put(130,60){\circle*{0.7}}
\put(140,60){\circle*{0.7}}
\put(150,60){\circle*{0.7}}
\put(160,60){\circle*{0.7}}

\put(0,40){\circle*{0.7}}
\put(10,40){\circle*{0.7}}
\put(20,40){\circle*{0.7}}
\put(30,40){\circle*{0.7}}
\put(40,40){\circle*{0.7}}
\put(50,40){\circle*{0.7}}
\put(60,40){\circle*{0.7}}
\put(70,40){\circle*{0.7}}
\put(80,40){\circle*{0.7}}
\put(90,40){\circle*{0.7}}
\put(100,40){\circle*{0.7}}
\put(110,40){\circle*{0.7}}
\put(120,40){\circle*{0.7}}
\put(130,40){\circle*{0.7}}
\put(140,40){\circle*{0.7}}
\put(150,40){\circle*{0.7}}

\put(0,30){\circle*{0.7}}
\put(10,30){\circle*{0.7}}
\put(20,30){\circle*{0.7}}
\put(30,30){\circle*{0.7}}
\put(40,30){\circle*{0.7}}
\put(50,30){\circle*{0.7}}
\put(60,30){\circle*{0.7}}
\put(70,30){\circle*{0.7}}
\put(80,30){\circle*{0.7}}
\put(90,30){\circle*{0.7}}
\put(100,30){\circle*{0.7}}
\put(110,30){\circle*{0.7}}
\put(120,30){\circle*{0.7}}
\put(130,30){\circle*{0.7}}
\put(140,30){\circle*{0.7}}

\put(0,24){\circle*{0.7}}
\put(10,24){\circle*{0.7}}
\put(20,24){\circle*{0.7}}
\put(30,24){\circle*{0.7}}
\put(40,24){\circle*{0.7}}
\put(50,24){\circle*{0.7}}
\put(60,24){\circle*{0.7}}
\put(70,24){\circle*{0.7}}
\put(80,24){\circle*{0.7}}
\put(90,24){\circle*{0.7}}
\put(100,24){\circle*{0.7}}
\put(110,24){\circle*{0.7}}
\put(120,24){\circle*{0.7}}
\put(130,24){\circle*{0.7}}

\put(0,20){\circle*{0.7}}
\put(10,20){\circle*{0.7}}
\put(20,20){\circle*{0.7}}
\put(30,20){\circle*{0.7}}
\put(40,20){\circle*{0.7}}
\put(50,20){\circle*{0.7}}
\put(60,20){\circle*{0.7}}
\put(70,20){\circle*{0.7}}
\put(80,20){\circle*{0.7}}
\put(90,20){\circle*{0.7}}
\put(100,20){\circle*{0.7}}
\put(110,20){\circle*{0.7}}
\put(120,20){\circle*{0.7}}

\put(0,17){\circle*{0.7}}
\put(10,17){\circle*{0.7}}
\put(20,17){\circle*{0.7}}
\put(30,17){\circle*{0.7}}
\put(40,17){\circle*{0.7}}
\put(50,17){\circle*{0.7}}
\put(60,17){\circle*{0.7}}
\put(70,17){\circle*{0.7}}
\put(80,17){\circle*{0.7}}
\put(90,17){\circle*{0.7}}
\put(100,17){\circle*{0.7}}
\put(110,17){\circle*{0.7}}

\put(0,15){\circle*{0.7}}
\put(10,15){\circle*{0.7}}
\put(20,15){\circle*{0.7}}
\put(30,15){\circle*{0.7}}
\put(40,15){\circle*{0.7}}
\put(50,15){\circle*{0.7}}
\put(60,15){\circle*{0.7}}
\put(70,15){\circle*{0.7}}
\put(80,15){\circle*{0.7}}
\put(90,15){\circle*{0.7}}
\put(100,15){\circle*{0.7}}

\put(0,13){\circle*{0.7}}
\put(10,13){\circle*{0.7}}
\put(20,13){\circle*{0.7}}
\put(30,13){\circle*{0.7}}
\put(40,13){\circle*{0.7}}
\put(50,13){\circle*{0.7}}
\put(60,13){\circle*{0.7}}
\put(70,13){\circle*{0.7}}
\put(80,13){\circle*{0.7}}
\put(90,13){\circle*{0.7}}

\put(0,12){\circle*{0.7}}
\put(10,12){\circle*{0.7}}
\put(20,12){\circle*{0.7}}
\put(30,12){\circle*{0.7}}
\put(40,12){\circle*{0.7}}
\put(50,12){\circle*{0.7}}
\put(60,12){\circle*{0.7}}
\put(70,12){\circle*{0.7}}
\put(80,12){\circle*{0.7}}

\put(0,11){\circle*{0.7}}
\put(10,11){\circle*{0.7}}
\put(20,11){\circle*{0.7}}
\put(30,11){\circle*{0.7}}
\put(40,11){\circle*{0.7}}
\put(50,11){\circle*{0.7}}
\put(60,11){\circle*{0.7}}
\put(70,11){\circle*{0.7}}

\put(0,10){\circle*{0.7}}
\put(10,10){\circle*{0.7}}
\put(20,10){\circle*{0.7}}
\put(30,10){\circle*{0.7}}
\put(40,10){\circle*{0.7}}
\put(50,10){\circle*{0.7}}
\put(60,10){\circle*{0.7}}

\put(0,9){\circle*{0.7}}
\put(10,9){\circle*{0.7}}
\put(20,9){\circle*{0.7}}
\put(30,9){\circle*{0.7}}
\put(40,9){\circle*{0.7}}
\put(50,9){\circle*{0.7}}

\put(0,8){\circle*{0.7}}
\put(10,8){\circle*{0.7}}
\put(20,8){\circle*{0.7}}
\put(30,8){\circle*{0.7}}
\put(40,8){\circle*{0.7}}

\put(0,7){\circle*{0.7}}
\put(10,7){\circle*{0.7}}
\put(20,7){\circle*{0.7}}
\put(30,7){\circle*{0.7}}

\put(0,6){\circle*{0.7}}
\put(10,6){\circle*{0.7}}
\put(20,6){\circle*{0.7}}

\put(0,5){\circle*{0.7}}
\put(10,5){\circle*{0.7}}

\put(0,20){\circle{2}}
\put(60,20){\circle{2}}
\put(120,20){\circle{2}}

\put(0,40){\circle{2}}
\put(60,40){\circle{2}}

\put(0,30){\circle{2}}

\end{picture}

\medskip

\begin{sledstvie} В слоях достаточно длинного косого дождя содержится косой дождь любой наперёд заданной длины. \end{sledstvie}

Для дальнейшего удобства введем функцию $f(x)=x!x+1$.


\newpage

\section{Одноцветные треугольники}

Напомним, что {\it стандартный треугольник} --- это треугольники вида $$(x_0,y_0), \quad (x_0+b,y_0), \quad (x_0,y_0+a),$$ где $x_0,y_0,a,b\in\mathbb{Q}$, $a,b>0$. Вершину $(x_0,y_0+a)$ будем называть {\it верхней}, оставшиеся две вершины --- {\it нижними}.

\bigskip

\begin{teorema} \label{treug2cveta} Если все рациональные точки косого дождя длины $\vdW_2(5)$ покрашены в 2 цвета, то в этом косом дожде найдётся стандартный треугольник площади $1/2$, все вершины которого одного цвета. \end{teorema}

\begin{proof}

Будем называть цвета {\it первым} и {\it вторым}.

Рассмотрим одноцветную горизонтальную арифметическую прогрессию из 5 членов. Существование такой прогрессии обеспечивается теоремой Ван дер Вардена. Пусть шаг этой прогрессии равен $b$, а цвет --- первый.

Рассмотрим косой дождь, основанием которого является данная прогрессия. Согласно Свойству 0, он является поддождём исходного дождя.

Слой на высоте $\dfrac{1}{b}$ (назовём его {\it верхним}) состоит из 4 точек. Если хотя бы одна из них имеет первый цвет, треугольник найден. Пусть они все второго цвета.

Слой на высоте $\dfrac{1}{2b}$ (назовём его {\it средним}) состоит из 3 точек. Если хотя бы одна из них имеет первый цвет, треугольник найден. Пусть они все второго цвета.

Легко убедиться, что в таком случае одна точка из вехнего слоя и две точки из среднего слоя образуют стандартный треугольник площади 1/2, все вершины которого второго цвета.

\end{proof}

\begin{sledstvie} Если все рациональные точки плоскости окрашены в 2 цвета, то найдётся стандартный треугольник площади $1/2$, все вершины которого одного цвета. \end{sledstvie}

\begin{teorema}
Если все рациональные точки косого дождя длины $\vdW_3(f(\vdW_2(5)))$ окрашены в 3 цвета, то найдётся одноцветный стандартный треугольник площади $1/2$.
\end{teorema}

\begin{proof}

Рассмотрим в основании данного косого дождя одноцветную арифметическую прогрессию из $f(\vdW_2(5))$ членов. Её существование обеспечивается теоремой Ван дер Вардена.

Рассмотрим косой дождь, построенный на этой прогрессии как на основании. Если хотя бы одна точка в слоях этого косого дождя имеет тот же цвет, что и точки в основании, --- треугольник найден. Пусть все точки в слоях покрашены в один из двух оставшихся цветов.

Согласно теореме \ref{poddozhd}, в слоях найдется косой дождь длины $\vdW_2(5)$. Напомним, что все точки этого косого дождя покрашены в один из двух цветов. Согласно Теореме 2, в таком косом дожде найдется стандартный треугольник площади 1/2.

\end{proof}

\begin{sledstvie}
Если все рациональные точки плоскости окрашены в 3 цвета, то найдётся стандартный треугольник площади $1/2$, все вершины которого одного цвета. \end{sledstvie}

Обобщением теоремы \ref{treug2cveta} является основное утверждение работы. Для его доказательства сформируем последовательность $\SR(h)$ рекуррентным образом: $$\SR(1)=2, \qquad \SR(2)=\vdW_2(5), \qquad \SR(3)=\vdW_3(f(\SR(2))),$$ $$\SR(h)=\vdW_h(f(\SR(h-1))) \quad (h\ge 3).$$

{\bf Примечание.} {\it SR = <<slanting rain>>.}

\begin{teorema} \label{treugMNOGOcvetov}
Если все точки косого дождя длины $\SR(h)$ покрашены в $h$ цветов, то в этом косом дожде содержится одноцветный стандартный треугольник площади 1/2.
\end{teorema}

\begin{proof}

Доказательство будем проводить индукцией по $h$.

База индукции ($h=2$) содержится в Теореме 2.

Предположим, что для $h-1$ цветов утверждение доказано, и произведём переход индукции.

Рассмотрим произвольный косой дождь длины $\SR(h)=\vdW_h(f(\SR(h-1)))$. Согласно теореме Ван дер Вардена, из основания этого дождя можно выделить одноцветную прогрессию длины $f(\SR(h-1))$.

Рассмотрим косой дождь, для которого эта прогрессия является основанием. Если хотя бы одна из точек в слоях этого дождя имеет тот же цвет, что и точки в основании, --- искомый треугольник найден. Предположим, что все точки в слоях этого дождя покрашены в один из оставшихся $h-1$ цветов.

Согласно теореме \ref{poddozhd}, в слоях исходного дождя найдётся поддождь длины $\SR(h-1)$. Согласно предположению индукции, в таком поддожде найдётся искомый треугольник.

\end{proof}

\begin{sledstvie}
Если все рациональные точки плоскости окрашены в конечное число цветов, то найдётся одноцветный стандартный треугольник площади $1/2$.
\end{sledstvie}

\begin{sledstvie}
Если все рациональные точки плоскости окрашены в конечное число цветов, то найдётся одноцветный стандартный треугольник заданной положительной площади $S\in\mathbb{Q}$.
\end{sledstvie}

\begin{proof}

Пусть плоскость покрашена в $h$ цветов. Аффинное преобразование $$(x,y)\mapsto \left(\dfrac{1}{2S}x,y\right)$$ задает новую раскраску плоскости в $h$ цветов. В новой раскраске существует стандартный треугольник площади 1/2. Его прообразом был стандартный треугольник площади $S$.

\end{proof}


\newpage

\section{Одноцветные симплексы}

В многомерном случае мы рассмотрим следующую группу преобразований пространства:
$$G_3=\left\{\mathcal{U}_P^{k_x,k_y,\dots,k_z}, P\in\mathbb{Q}^n, \begin{array}{c} k_x,k_y,\dots,k_z\in\mathbb{Q}, \\ k_x,k_y,\dots,k_z>0, \\ k_x\cdot k_y\cdot \dots\cdot k_z=1 \end{array} \right\} \cup \{T_{\overrightarrow{v}}, \overrightarrow{v}\in\mathbb{Q}^n\},$$ где $$\mathcal{U}_{(x_0,y_0,\dots,z_0)}^{k_x,k_y,\dots,k_z} \colon (x,y,\dots,z)\mapsto \left(x_0+k_x(x-x_0),y_0+k_y(y-y_0),\dots,z_0+k_z(z-z_0)\right).$$

Напомним, что {\it стандартный симплекс} --- это симплекс вида $$(x_0,y_0,\dots,z_0), \quad (x_0+a,y_0,\dots,z_0), \quad (x_0,y_0+b,\dots,z_0),\quad \dots, \quad (x_0,y_0,\dots,z_0+c),$$ где $x_0,y_0,\dots,z_0,a,b,\dots,c\in\mathbb{Q}$, $a,b,\dots,c>0$. Грань этого симплекса, лежащую в гиперплоскости $x=x_0$, будем называть его {\it основанием}, точку $(x_0+a,y_0,\dots,z_0)$ --- {\it вершиной}, а число $a$ --- {\it высотой}.

\bigskip

Действие группы $G_3$ на $2^{\mathbb{Q}^n}$ обладает свойством, аналогичным действию $G_2$ на $2^{\mathbb{Q}^2}$: мы увидим, что орбита любого стандартного симплекса под действием этой группы содержит одноцветную фигуру. Заметим, что два стандартных симплекса принадлежат одной орбите в том и только в том случае, когда их объемы равны.

\begin{opredelenie}[Многомерный косой дождь] {\it $n$-мерный косой дождь длины $\ell\in\mathbb{N}$ с шагом $(s_y,\dots,s_z)\in\mathbb{Q}^{n-1}$} и началом в $(x_0,y_0,\dots,z_0)\in\mathbb{Q}^n$ --- это набор рациональных точек, состоящий из: \par \begin{itemize} \item {\it основания} --- $(n-1)$-мерной решётки $$(x_0,y_0,\dots,z_0)+\{0\} \times \{0,s_y,2s_y,\dots,(\ell-1)s_y\} \times \ldots \times \{0,s_z,2s_z,\dots,(\ell-1)s_z\},$$ лежащей в аффинной гиперплоскости $x=x_0$; \item {\it слоёв} --- всех вершин стандартных симплексов объема $1/n$, основание которых лежит в основании данного многомерного косого дождя; один слой --- это множество $$\hspace{-2em} \left\{\left(x_0+\dfrac{1}{w\cdot s_y\cdot \ldots \cdot s_z},y_0+\kappa_ys_y,\dots,z_0+\kappa_zs_z\right), \begin{array}{c} w\in\{1,2,\dots,(\ell-1)^{n-1}\}, \\ 0\le\kappa_y,\dots,\kappa_z\le\ell-1, \\ (\ell-1-\kappa_y)\cdot\ldots\cdot(\ell-1-\kappa_z)\ge w \end{array} \right\};$$ число $\dfrac{1}{w\cdot s_y\cdot \ldots \cdot s_z}$ будем называть {\it высотой} соответствующего слоя. \end{itemize} \end{opredelenie}

Следующая теорема устанавливает, что в слоях достаточно большого многомерного косого дождя содержится многомерный косой дождь заданной длины. Её доказательство полностью аналогично двумерному случаю.

Введём обозначение $F_n(t)=((t-1)^{n-1}+1)!\cdot t+1.$

\begin{teorema} \label{mnogopoddozhd} Всякий $n$-мерный косой дождь длины $L=F_n(\ell)$ содержит в своих слоях $n$-мерный косой дождь длины $\ell$. \end{teorema}

\begin{proof}

Назовем дождь длины $L$ {\it большим}. Пусть его начало --- точка $(x_0,y_0,\dots,z_0)\in\mathbb{Q}^n$.

Не умаляя общности, для удобства изложения можно считать, что шаг большого дождя равен $(1,\dots,1)\in\mathbb{Q}^{n-1}$, а его слои находятся на высоте $1/W$, где $W$ пробегает все целые значения из диапазона $[1,(L-1)^{n-1}]$. Если это не так, применим аффинное преобразование $$(x,y,\dots,z)\mapsto \left(t_xx,t_yy,\dots,t_zz\right)$$ с подходящими  $t_x,t_y,\dots,t_z>0$, $t_xt_y\dots t_z=1$. Очевидно, что такое преобразование переводит многомерный косой дождь в многомерный косой дождь.

Положим $s=((\ell-1)^{n-1}+1)!$.

Рассмотрим слой большого дождя, расположенный на высоте $a=\dfrac{1}{s^{n-1}}$. В этом слое содержится $(n-1)$-мерная решетка $$(x_0+a,y_0,\dots ,z_0)+\{0\}\times\{0,s,2s,\dots ,(\ell-1)s\}^{n-1},$$ поскольку $$(L-1-\kappa_ys)\cdot\ldots\cdot(L-1-\kappa_zs) \ge (L-1-(\ell-1)s)^{n-1} \ge s^{n-1}\qquad \forall \kappa_y,\dots ,\kappa_z\in\{0,1,\dots ,\ell-1\}\;.$$ Назовем многомерный косой дождь с шагом $(s,\dots ,s)\in\mathbb{Q}^{n-1}$, основанием которого является эта решётка, {\it малым}. Докажем, что весь малый дождь содержится в большом дожде.

Слои малого дождя расположены на высоте $\dfrac{1}{ws^{n-1}}$ над его основанием, причём $w\in\left\{1,2,\dots ,(\ell-1)^{n-1}\right\}$. Осталось заметит, что  $$a+\dfrac{1}{ws^{n-1}}=\dfrac{1+w}{ws^{n-1}}=\dfrac{1}{w\cdot s^{n-2}\cdot 1\cdot \ldots \cdot \widehat{(w+1)}\cdot \ldots \cdot ((\ell-1)^{n-1}+1)}=\dfrac{1}{W};$$ очевидно, знаменатель последней дроби находится в диапазоне $[1,(L-1)^{n-1}]$.

\end{proof}

Обозначим $\mathcal{M}_d(\ell)=\{0,1,\dots ,\ell-1\}^d$ --- $d$-мерный куб из $\ell^d$ точек. Определим последовательность $\SR_n(h)$ рекуррентным образом: $$\SR_n(1)=2, \qquad \SR_n(h)=\vdW_h(\mathcal{M}_{n-1}(F_n(\SR_n(h-1)))) \quad (h\ge 2).$$

\begin{teorema} \label{simplex} В любом $n$-мерном косом дожде длины $SR_n(h)$, покрашенном в $h$ цветов, cуществует одноцветный стандартный симплекс объема $1/n$. \end{teorema}

\begin{proof}

Доказательство индукцией по числу цветов $h$.

{\it База индукции.} Пусть $h=1$. Тогда любая фигура одноцветна.

{\it Переход индукции.} Пусть $h>1$ и пусть утверждение доказано для любого числа цветов, меньшего $h$. Рассмотрим $n$-мерный косой дождь длины $SR_n$. В силу многомерной теоремы ван дер Вардена в его основании можно выбрать одноцветный $(n-1)$-мерный куб $\mathcal{M}$ со стороной $L=F_n(\SR_n(h-1))$. Рассмотрим $n$-мерный косой дождь, основанием которого служит $\mathcal{M}$. Если хотя бы одна из точек в слоях этого дождя покрашена в тот же цвет, что и основание, симплекс обнаружен. Если все точки в слоях этого дождя покрашены в цвета, отличные от цвета основания, то рассмотрим $n$-мерный косой дождь длины $\SR_n(h-1)$, являющийся подмножеством слоёв исходного. Такой дождь существует согласно теореме \ref{mnogopoddozhd}. По предположению индукции в этом поддожде содержится одноцветный стандартный симплекс. \end{proof}

\begin{sledstvie} При любой раскраске $\mathbb{Q}^n$ в конечное число цветов найдется бесконечно много одноцветных стандартных симплексов одинакового цвета и заданного положительного рационального объема. \end{sledstvie}

\begin{proof} Пусть требуемый объем $V\in\mathbb{Q}$, а число цветов $h\in\mathbb{N}$. Аффинное преобразование $$(x,y,\dots ,z)\mapsto \left(\dfrac{1}{nV}x,y,\dots ,z\right)$$ задает новую раскраску пространства. Согласно теореме \ref{simplex} в новой раскраске в каждом $n$-мерном косом дожде со стороной $\SR_n(h)$ существует стандартный симплекс объема $1/n$. Следовательно, таких симплексов бесконечно много, а значит, можно выбрать бесконечно много симплексов одинакового цвета. Их прообразы являются искомыми симплексами. \end{proof}

{\bf Примечание.} Теорема \ref{treugMNOGOcvetov} является частным (двумерным) случаем теоремы \ref{simplex}, равно как и понятие и свойства косого дождя на плоскости.


\newpage

\section{Перспективы}

\subsection*{Более сложные фигуры}

Интересно было бы установить существование не только одноцветных $n$-мерных стандартных симплексов заданного объема, но и одноцветных стандартных параллелепипедов заданного объема, т.е. параллелепипедов $$\begin{array}{c}(x_0+\varepsilon_xa,y_0+\varepsilon_yb,\dots ,z_0+\varepsilon_zc), \\  \varepsilon_x,\varepsilon_y,\dots ,\varepsilon_z\in\{0,1\},\quad a,b,\dots ,c>0, \quad a\cdot b\cdot \ldots \cdot c=V,\end{array}$$ а также одноцветных комбинаторно более сложных структур заданного объема, либо построить (или иным способом доказать существование) раскрасок, в которых эти структуры отсутствуют.

\subsection*{Более сложные группы}

Интерес представляет группа $$G_4 = \left\{\mathcal{U}_P^{k^{n-1}, \underbrace{\tfrac{1}{k},\dots ,\tfrac{1}{k}}_{n-1}}, k\in\mathbb{Q},k>0\right\} \cup \{T_{\overrightarrow{v}}, \overrightarrow{v}\in\mathbb{Q}^n\},$$ (а также иные группы, в которых преобразования $\mathcal{U}$ зависят от одного лишь параметра $k$ (как и в теореме ван дер Вардена). В качестве характеристического для действия такой группы на $2^{\mathbb{Q}^n}$ стандартного симплекса можно рассматривать {\it равнобедренный} стандартный симплекс фиксированного объема $$(x_0,y_0,\dots ,z_0), \quad (x_0+a,y_0,\dots ,z_0), \quad (x_0,y_0+b,\dots ,z_0),\quad \dots , \quad (x_0,y_0,\dots ,z_0+c),$$ где $x_0,y_0,\dots ,z_0,a,b,\dots ,c\in\mathbb{Q}$, $a,b,\dots ,c>0$, $b=\dots =c$, $\dfrac{1}{n}ab\dots c=V.$


\newpage


\end{document}